\newenvironment{proof}[1][Proof]{\noindent\textbf{#1.} }{\ \rule{0.5em}{0.5em}}
\newtheorem{definition}{Definition}[section]
\newtheorem{theorem}[definition]{Theorem}
\numberwithin{equation}{section}
\def\1B{\text{1\!\!I}}
\begin{document}

\title{Infinite horizon optimal control of forward- backward stochastic differential
equations with delay.}
\author{\ \ Nacira AGRAM\thanks{Laboratory of Applied Mathematics, University Med
Khider, Po. Box 145, Biskra $\left(  07000\right)  $ Algeria. Email:
agramnacira@yahoo.fr} \ and Bernt \O KSENDAL\thanks{Center of Mathematics for
Applications (CMA), University of Oslo, Box 1053 Blindern, N-0316 Oslo,
Norway. Email: oksendal@math.uio.no} \thanks{The research leading to these
results has received funding from the European Research Council under the
European Community's Seventh Framework Programme (FP7/2007-2013) / ERC grant
agreement no [228087].}}
\date{\ \ 15 May 2013}
\maketitle

\begin{abstract}
We consider a problem of optimal control of an infinite horizon system
governed by forward-backward stochastic differential equations with delay.
Sufficient and necessary maximum principles for optimal control under partial
information in infinite horizon are derived. We illustrate our results by an
application to a problem of optimal consumption with respect to recursive
utility from a cash flow with delay.

\end{abstract}

\bigskip\textsf{Keywords: Infinite horizon; Optimal control; Stochastic delay
equation; Stochastic differential utility; Lévy processes; Maximum principle;
Hamiltonian; Adjoint processes; Partial information.}\\

\textsf{[2010]MSC:   93EXX; 93E20; 60J75; 60H10; 60H20; 34K50}

\section{Introduction}

One of the problems posed recently and which has got a lot of attention\ is
the optimal control of forward-backward stochastic differential equations
(FBSDEs). This theory was first developed in the early $90$s by \cite{A},
\cite{MPY}, \cite{Pe} and others.

The paper \cite{Pe} established the maximum principle of FBSDE in the convex
setting and later it was studied by many authors such as \cite{AG}, \cite{MO},
\cite{OSM}, \cite{OSF}, \cite{X}. For the existence of an optimal control of
FBSDEs, see \cite{BGM}.

The optimal control problem of FBSDE has interesting applications especially
in finance like in option pricing and recursive utility problems. The latter
was introduced by \cite{DE} and for more details about the recursive utility
maximization problems, we refer to \cite{BS}, \cite{Pe}.

The recursive utility is a solution of the backward stochastic differential
equation (BSDE) which is not necessarily linear. The BSDE was studied by
\cite{P}, \cite{PP} etc.

All the papers above where dealing with finite horizon FBSDEs. Other related
stochastic control publications dealing with finite horizon only are
\cite{BL}, \cite{ME} and \cite{TL}.

Related papers dealing with infinite horizon control, but either without FB
systems or without delay, are \cite{AHOP}, \cite{HOP}, \cite{MV}, \cite{PS}
and \cite{Y}.

We will study this problem by using a version of the maximum principle which
is a combination of the infinite horizon maximum principle in \cite{AHOP} and
the finite horizon maximum principle for FBSDEs in \cite{OSM} and \cite{MO}.
We extend an application in \cite{MO} to infinite horizon and in \cite{OSZ}
for the FBSDE.

We emphasize that although the current paper has similarities with
\cite{AHOP}, the fact that we are considering forward-backward systems and not
just forward systems creates a new situation. In particular, we now get
additional transversality conditions involving the additional adjoint process
$\lambda$. See Theorem $2.1$ and Theorem $3.1$.

In this paper we obtain a sufficient and a necessary maximum principle for
infinite horizon control of \ FBSDEs with delay. As an illustration we solve
explicitly an infinite horizon optimal consumption problem with recursive utility.

The partial results mentioned above indicate that it should be possible to
prove a general existence and uniqueness theorem for controlled infinite
horizon FBSDEs with delay. However, this is difficult problem and we leave
this for future research.

\section{Setting of the problem}

Let $\left(  \Omega,\mathcal{F},\mathbb{F}=(\mathcal{F}_{t})_{t\geq
0},P\right)  $ be a complete filtered probability space on which a
one-dimensional standard Brownian motion $B\left(  t\right)  $ and an
independent compensated Poisson random measure $\tilde{N}(dt,da)=N(dt,da)-\nu
(da)dt$ are defined. We assume that $\mathbb{F}$ is the natural filtration,
made right continuous generated by the processes $B$ and $N$.

We study the following infinite horizon coupled forward-backward stochastic
differential equations control system with delay:

\bigskip

FORWARD EQUATION in the unknown measurable process $X^{u}(t)$:
\begin{equation}%
\begin{array}
[c]{l}%
dX(t)=dX^{u}(t)=b\left(  t,\boldsymbol{X}^{u}(t),u(t)\right)  dt+\sigma\left(
t,\boldsymbol{X}^{u}(t),u(t)\right)  dB(t)\\
\text{ \ \ \ \ \ \ \ \ }+%
{\textstyle\int\limits_{\mathbb{R}_{0}}}
\theta\left(  t,\boldsymbol{X}^{u}(t),u(t),a\right)  \tilde{N}(dt,da);t\in
\left[  0,\infty\right)  ,\\
X(t)=X_{0}(t);\text{ \ \ \ \ \ \ \ \ \ \ \ \ \ \ \ \ \ \ \ }t\in\left[
-\delta,0\right]  ,
\end{array}
\label{eq1.1}%
\end{equation}

where%

\[
\boldsymbol{X}^{u}\mathbb{(}t\mathbb{)=}\left(  X^{u}(t),X_{1}^{u}%
(t),X_{2}^{u}(t)\right)  \text{,}%
\]

with \ \ \ \
\[
X_{1}^{u}(t)=X^{u}(t-\delta),\text{ }X_{2}^{u}(t)=%
{\textstyle\int\limits_{t-\delta}^{t}}
e^{-\rho(t-r)}X^{u}(r)dr,
\]

and $X_{0}$ is a given continuous (and deterministic) function on $\left[
-\delta,0\right]  $. \newline

BACKWARD EQUATION in the unknown measurable processes $Y^{u}(t),Z^{u}%
(t),K^{u}\left(  t,\cdot\right)  $:
\begin{equation}%
\begin{array}
[c]{l}%
dY^{u}(t)=-g\left(  t,\boldsymbol{X}^{u}(t),Y^{u}(t),Z^{u}(t),u(t)\right)
dt+Z^{u}(t)dB(t)\\
\text{ \ \ \ \ \ \ \ \ }+%
{\textstyle\int\limits_{\mathbb{R}_{0}}}
K^{u}\left(  t,a\right)  \tilde{N}(dt,da);t\in\left[  0,\infty\right)  .
\end{array}
\label{eq1.2}%
\end{equation}

We interpret the infinite horizon BSDE $\left(  \ref{eq1.2}\right)  $ in the
sense of Pardoux \cite{P} i.e. for all $T<\infty$, the triple $(Y^{u}%
(t),Z^{u}(t),K^{u}\left(  t,\cdot\right)  )$ solves the equation%

\begin{equation}%
\begin{array}
[c]{l}%
Y^{u}(t)=Y(T)+%
{\textstyle\int\limits_{t}^{T}}
g\left(  s,\boldsymbol{X}^{u}(s),Y^{u}(s),Z^{u}(s),u(s)\right)  ds-%
{\textstyle\int\limits_{t}^{T}}
Z^{u}(s)dB(s)\\
\text{ \ \ \ \ \ \ \ \ }-%
{\textstyle\int\limits_{t}^{T}}
{\textstyle\int\limits_{\mathbb{R}_{0}}}
K^{u}\left(  s,a\right)  \tilde{N}(ds,da);\text{ }0\leq t\leq T\text{.}%
\end{array}
\label{eq1.3}%
\end{equation}

We call the process $(Y^{u}(t),Z^{u}(t),K^{u}\left(  t,\cdot\right)  )$ the
solution of $\left(  \ref{eq1.3}\right)  $ if it also satisfies
\begin{equation}
E\underset{t\geq0}{[\sup\text{ }}e^{\kappa t}\left(  Y^{u}\right)  ^{2}(t)+%
{\textstyle\int\limits_{0}^{\infty}}
e^{\kappa t}(\left(  Z^{u}\right)  ^{2}(t)+%
{\textstyle\int\limits_{\mathbb{R}_{0}}}
\left(  K^{u}\right)  ^{2}\left(  t,a\right)  \nu(da))dt]<\infty\label{eq1.4}%
\end{equation}
for all constants $\kappa>0$. We refer the reader to Section $4$ in \cite{P}
for assumptions of the coefficients that insure the existence and uniqueness
of the solution of the FBSDE system. \newline Note that $\left(
\ref{eq1.4}\right)  $ implies in particular that $%
\begin{array}
[c]{c}%
\underset{t\rightarrow\infty}{\text{lim}}Y^{u}(t)=0
\end{array}
$.

Throughout this paper, we introduce the following notations%

\[%
\begin{array}
[c]{l}%
\delta>0,\rho>0\text{ are given constants,}\\
b:[0,\infty)\times\mathbb{R}^{3}\times\mathcal{U}\times\Omega\rightarrow
\mathbb{R},\\
\sigma:[0,\infty)\times\mathbb{R}^{3}\times\mathcal{U}\times\Omega
\rightarrow\mathbb{R},\\
g:[0,\infty)\times\mathbb{R}^{5}\times\mathcal{U}\times\Omega\rightarrow
\mathbb{R},\\%
\mathbb{R}
_{0}:=%
\mathbb{R}
-\{0\},\\
\theta,K:[0,\infty)\times\mathbb{R}^{3}\times\mathcal{U}\times\mathbb{R}%
_{0}\times\Omega\rightarrow\mathbb{R},\\
f:[0,\infty)\times\mathbb{R}^{5}\times\mathcal{R}\times\mathcal{U}\times
\Omega\rightarrow\mathbb{R},\\
h:\mathbb{R\rightarrow R},
\end{array}
\]

where the coefficients $b,\sigma,\theta$ and $g$ are FrÈchet differentiable
$(C^{1})$ with respect to the variables $\left(  \boldsymbol{x},y,z,u\right)
$. Here $\mathcal{R}$ is the set of all functions $%
\begin{array}
[c]{c}%
k:%
\mathbb{R}
_{0}\rightarrow%
\mathbb{R}
.
\end{array}
$In the following, we will for simplicity suppress the dependence on
$\omega\in\Omega$ in the notation.

Note that if $g$ does not depend on $Y^{u}(s)$ and $Z^{u}(s)$ then the ItÙ
representation theorem for LÈvy processes ( see \cite{GOP}), implies that
equation $\left(  \ref{eq1.3}\right)  $ is equivalent to the equation%

\begin{equation}
Y^{u}(t)=E[Y(T)+%
{\textstyle\int\limits_{t}^{T}}
g\left(  s,\boldsymbol{X}^{u}(s),u(s)\right)  ds\mid\mathcal{F}_{t}]\text{;
}t\leq T\text{, for all }T<\infty\text{.} \label{eq1.5}%
\end{equation}

Let $\mathbb{E}=\{\mathcal{E}_{t}\}_{t\geq0}$ with $\mathcal{E}_{t}%
\subseteq\mathcal{F}_{t}$ for all $t\geq0$ be a given subfiltration,
representing the information available to the controller at time $t$.

Let $\mathcal{U}$ be a non-empty convex subset of $\mathbb{R}.$ We let
$\mathcal{A=A}_{\mathcal{E}}$ denote a given locally convex family of
admissible $\mathbb{E}$-predictable control processes $u$ with values in
$\mathcal{U}$, such that the corresponding solution $(X^{u}$, $Y^{u}$, $Z^{u}%
$, $K^{u})$ of $\left(  \ref{eq1.1}\right)  -\left(  \ref{eq1.5}\right)  $
exist and
\[
E[\int\limits_{0}^{\infty}\left\vert X^{u}(t)\right\vert ^{2}dt]<\infty.
\]

The corresponding performance functional is%

\begin{equation}
J(u)=E[%
{\textstyle\int\limits_{0}^{\infty}}
f\left(  t,\boldsymbol{X}(t)\right)  \text{ }dt+h(Y(0))], \label{eq1.6}%
\end{equation}

where $f\left(  t,\boldsymbol{X}(t)\right)  $ is a short-hand notation for
$f\left(  t,\boldsymbol{X}^{u}(t),Y^{u}(t),Z^{u}(t),K^{u}\left(
t,\cdot\right)  ,u(t)\right)  $.

We assume that the functions $f$ and $h$ are FrÈchet differentiable $(C^{1})$
with respect to the variables $\left(  \boldsymbol{x},y,z,k(\cdot),u\right)  $
and $Y(0)$, respectively, and $f$ satisfies%

\begin{equation}
E[%
{\textstyle\int\limits_{0}^{\infty}}
\left\vert f\left(  t,\boldsymbol{X}(t)\right)  \text{ }\right\vert
dt]<\infty\text{, for all }u\in\mathcal{A}\text{.} \label{eq1.7}%
\end{equation}

The optimal control problem is to find an optimal control $u^{\ast}%
\in\mathcal{A}$ and the value function $\Phi:$ $C(\left[  -\delta,0\right]
)\rightarrow%
\mathbb{R}
$ such that%

\begin{equation}
\Phi(X_{0})=\underset{u\in\mathcal{A}}{\text{sup}}J(u)=J(u^{\ast}).
\label{eq1.8}%
\end{equation}

We will study this problem by using a version of the maximum principle which
is a combination of the infinite horizon maximum principle in \cite{AHOP} and
the finite horizon maximum principle for FBSDEs in \cite{OSF} and \cite{OSM}.

The Hamiltonian%

\[
H:[0,\infty)\times%
\mathbb{R}
^{5}\times L^{2}(\nu)\times\mathcal{U}\times%
\mathbb{R}
^{3}\times L^{2}(\nu)\rightarrow%
\mathbb{R}
\]

is defined by%

\begin{equation}%
\begin{array}
[c]{c}%
H(t,\boldsymbol{x},y,z,k(\cdot),u,\lambda,p,q,r(\cdot))=f(t,\boldsymbol{x}%
,y,z,k,u)+g(t,\boldsymbol{x},y,z,u)\lambda\\
\text{ \ \ \ \ \ \ \ \ }+b(t,\boldsymbol{x},u)p+\sigma(t,\boldsymbol{x},u)q+%
{\textstyle\int\limits_{\mathbb{R}_{0}}}
\theta(t,\boldsymbol{x},u,a)r(a)\nu(da)\text{.}%
\end{array}
\label{eq1.9}%
\end{equation}

We assume that the Hamiltonian $H$ is FrÈchet differentiable $(C^{1})$ in the
variables $\boldsymbol{x},y,z,k$ and $u$.

We also assume that for all $t$ the FrÈchet derivative of $H(t,\boldsymbol{X}%
^{u}(t),Y^{u}(t),Z^{u}(t),k,u(t),p(t),q(t),r(t,\cdot))$ with respect to $k$,
denoted by $\nabla_{k}H(t,\cdot)$, as a random measure is absolutely
continuous with respect to $\nu$, with Radon-Nikodym derivative $\frac
{d\nabla_{k}H}{d\nu}$ satisfying%
\[
E[\int\limits_{0}^{T}\int\limits_{%
\mathbb{R}
}\left\vert \frac{d\nabla_{k}H}{d\nu}(t,a)\right\vert ^{2}\nu(da)dt]<\infty
\text{, for all }T<\infty.
\]

See Appendix A in \cite{OSF} for details.

We associate to the problem $\left(  \ref{eq1.8}\right)  $ the following pair
of forward-backward SDEs in the adjoint processes $\lambda(t)$, $($
$p(t),q(t),r(t,\cdot))$:

\bigskip

\bigskip

ADJOINT FORWARD EQUATION:%

\begin{equation}
\left\{
\begin{array}
[c]{l}%
d\lambda(t)=\dfrac{\partial H}{\partial y}(t)\text{ }dt+\dfrac{\partial
H}{\partial z}(t)\text{ }dB(t)+%
{\displaystyle\int\limits_{\mathbb{R}_{0}}}
\frac{d\nabla_{k}H}{d\nu}(t,a)\tilde{N}(dt,da)\\
\lambda(0)=h^{^{\prime}}(Y(0))
\end{array}
\right.  \label{eq1.10}%
\end{equation}

where we have used the short hand notation%
\[
\tfrac{\partial H}{\partial y}(t)=\tfrac{\partial}{\partial y}%
H(t,\boldsymbol{X}^{u}(t),y,Z^{u}(t),K^{u}(t,\cdot),u(t),\lambda
(t),p(t),q(t),r(t,\cdot))\mid_{y=Y(t)}%
\]

and similarly with $\tfrac{\partial H}{\partial z}(t),\tfrac{\partial
H}{\partial x}(t),...$

\bigskip

ADJOINT BACKWARD EQUATION:%

\begin{equation}
dp(t)=E[\mu(t)\mid\mathcal{F}_{t}]dt+q(t)dB(t)+%
{\textstyle\int\limits_{\mathbb{R}_{0}}}
r(t,a)\tilde{N}(dt,da);t\in\left[  0,\infty\right)  \label{eq1.11}%
\end{equation}

where%

\begin{equation}
\mu(t)=-\tfrac{\partial H}{\partial x}(t)-\tfrac{\partial H}{\partial x_{1}%
}(t+\delta)-e^{\rho t}(%
{\textstyle\int\limits_{t}^{t+\delta}}
\tfrac{\partial H}{\partial x_{2}}(s)e^{-\rho s}ds)\text{.} \label{eq1.12}%
\end{equation}

with terminal condition as in $\left(  \ref{eq1.4}\right)  $, i.e.%

\begin{equation}
E\underset{t\geq0}{[\sup\text{ }}e^{\kappa t}p^{2}(t)+%
{\textstyle\int\limits_{0}^{\infty}}
e^{\kappa s}(q^{2}(s)+%
{\textstyle\int\limits_{\mathbb{R}_{0}}}
r^{2}\left(  s,a\right)  \nu(da))ds]<\infty,\nonumber
\end{equation}

for all constants $\kappa>0$.

The unknown process $\lambda(t)$ is the adjoint process corresponding to the
backward system $(Y(t),Z(t),K(t,\cdot))$ and the triple unknown
$(p(t),q(t),r(t,\cdot))$ is the adjoint process corresponding to the forward
system $X(t)$.

We show that in this infinite horizon setting the appropriate terminal
conditions for the BSDEs for $(Y(t),Z(t),K(t,\cdot))$ and
$(p(t),q(t),r(t,\cdot))$ should be replaced by asymptotic transversality
conditions. See $(H_{3})$ and $(H_{6})$ below.

\section{Sufficient maximum principle for partial information}

We will prove in this section that under some assumptions the maximization of
the Hamiltonian leads to an optimal control.

\begin{theorem}
Let $\hat{u}\in\mathcal{A}$ with corresponding solutions $\boldsymbol{\hat{X}%
}(t),\hat{Y}(t),\hat{Z}(t),\hat{K}(t,\cdot),\hat{p}(t),\hat{q}(t),\hat
{r}(t,\cdot)$ and $\hat{\lambda}(t)$ of equations $\left(  \ref{eq1.1}\right)
$, $\left(  \ref{eq1.2}\right)  $, $\left(  \ref{eq1.10}\right)  $ and
$\left(  \ref{eq1.11}\right)  $. Suppose that

\begin{description}
\item[$(H_{1})$] (Concavity)
\end{description}

The functions $x\rightarrow h(x)$ and%
\[
(\boldsymbol{x},y,z,k(\cdot),u)\rightarrow H(t,\boldsymbol{x},y,z,k(\cdot
),u,\hat{\lambda}(t),\hat{p}(t),\hat{q}(t),\hat{r}(t,\cdot))
\]
are concave, for all $t\in\left[  0,\infty\right)  $.

\begin{description}
\item[$(H_{2})$] (The conditional maximum principle)%
\begin{align*}
&  \underset{v\in\mathcal{U}}{\max}E[H(t,\boldsymbol{\hat{X}}(t),\hat
{Y}(t),\hat{Z}(t),\hat{K}(t,\cdot),v,\hat{\lambda}(t),\hat{p}(t),\hat
{q}(t),\hat{r}(t,\cdot))\mid{\mathcal{E}}_{t}]\\
&  =E[H(t,\boldsymbol{\hat{X}}(t),\hat{Y}(t),\hat{Z}(t),\hat{K}(t,\cdot
),\hat{u}(t),\hat{\lambda}(t),\hat{p}(t),\hat{q}(t),\hat{r}(t,\cdot
))\mid{\mathcal{E}}_{t}]\text{.}%
\end{align*}

\end{description}

Moreover, suppose that for any $u\in\mathcal{A}$ with corresponding solutions
$\boldsymbol{X}(t),Y(t),Z(t),K(t,\cdot),p(t),$

$q(t),r(t,\cdot)$ and $\lambda(t)$ we have:

\begin{description}
\item[$(H_{3})$] (Transversality conditions)%
\[
\underset{T\rightarrow\infty}{\underline{\lim}}E[\mathbf{\ }\hat
{p}(T)\bigtriangleup\hat{X}(T)]\leq0
\]

\end{description}

and%
\[
\underset{T\rightarrow\infty}{\overline{\lim}}E\mathbf{\ [}\hat{\lambda
}(T)\bigtriangleup\hat{Y}(T)]\geq0\text{.}%
\]

where $%
\begin{array}
[c]{c}%
\bigtriangleup\hat{X}(T)=\hat{X}(T)-X(T),\text{ }\bigtriangleup\hat{Y}%
(T)=\hat{Y}(T)-Y(T).
\end{array}
$

\begin{description}
\item[$(H_{4})$] ( Growth conditions I) Suppose that for all $T<\infty$ the
following holds:%
\begin{align}
&  E[%
{\textstyle\int\limits_{0}^{T}}
\{(\bigtriangleup\hat{Y}(t))^{2}\{(\tfrac{\partial\hat{H}}{\partial y}%
(t))^{2}+%
{\textstyle\int\limits_{\mathbb{R}_{0}}}
\left\Vert \nabla_{k}\hat{H}(t,a)\right\Vert ^{2}\nu(da)\}\nonumber\\
&  +\hat{\lambda}^{2}(t)\{(\bigtriangleup\hat{Z}(t))^{2}+%
{\textstyle\int\limits_{\mathbb{R}_{0}}}
(\bigtriangleup\hat{K}(t,a))^{2}\nu(da)\}\nonumber\\
&  +(\bigtriangleup\hat{X}(t))^{2}\{\hat{q}^{2}(t)+%
{\textstyle\int\limits_{\mathbb{R}_{0}}}
\hat{r}^{2}(t,a)\nu(da)\}\nonumber\\
&  +\hat{p}^{2}(t)\{(\bigtriangleup\hat{\sigma}(t))^{2}+%
{\textstyle\int\limits_{\mathbb{R}_{0}}}
(\bigtriangleup\hat{\theta}(t,a))^{2}\nu(da)\}\}dt]<\infty. \label{eq2.4}%
\end{align}

\item[$(H_{5})$] ( Growth conditions II) Suppose that%
\begin{equation}%
\begin{array}
[c]{l}%
E[%
{\textstyle\int\limits_{0}^{T}}
\{\left\vert \hat{\lambda}(t)\bigtriangleup\hat{g}(t)\right\vert +\left\vert
\bigtriangleup\hat{Y}(t)\tfrac{\partial\hat{H}}{\partial y}(t)\right\vert
+\left\vert \bigtriangleup\hat{Z}(t)\tfrac{\partial\hat{H}}{\partial
z}(t)\right\vert \\
+%
{\textstyle\int\limits_{\mathbb{R}_{0}}}
\left\vert \nabla_{k}\hat{H}(t,a)\bigtriangleup\hat{K}(t,a)\right\vert
\nu(da)+\left\vert \bigtriangleup\hat{H}(t)\right\vert +\left\vert
\bigtriangleup\hat{b}(t)\hat{p}(t)\right\vert \\
+\left\vert \bigtriangleup\hat{\sigma}(t)\hat{q}(t)\right\vert +%
{\textstyle\int\limits_{\mathbb{R}_{0}}}
\left\vert \bigtriangleup\hat{\theta}(t,a)\hat{r}(t,a)\right\vert \nu(da)\\
+\left\vert \bigtriangleup\hat{X}(t)\tfrac{\partial\hat{H}}{\partial
x}(t)\right\vert +\left\vert \bigtriangleup\hat{u}(t)\tfrac{\partial\hat{H}%
}{\partial u}(t)\right\vert \}dt]<\infty.
\end{array}
\label{2.5}%
\end{equation}

\end{description}

where$%
\begin{array}
[c]{c}%
\sigma(t)=\sigma(t,\boldsymbol{X}(t),u(t))
\end{array}
,$ $%
\begin{array}
[c]{c}%
\hat{\sigma}(t)=\sigma(t,\boldsymbol{\hat{X}}(t),\hat{u}(t))
\end{array}
$ etc.

Then $\hat{u}$ is an optimal control for $\left(  \ref{eq1.8}\right)  $, i.e.%
\[
J(\hat{u})=\underset{u\in\mathcal{A}}{\text{sup}}J(u)\text{.}%
\]

\end{theorem}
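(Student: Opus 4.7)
The plan is to show $J(\hat u)-J(u)\geq 0$ for every admissible $u\in\mathcal{A}$, by combining concavity, It\^o's formula applied to the state--adjoint products, the transversality hypothesis, and the conditional maximum principle. Write
$$J(\hat u)-J(u)=I_1+I_2,\quad I_1=E\Big[\int_0^\infty(\hat f(t)-f(t))\,dt\Big],\quad I_2=E[h(\hat Y(0))-h(Y(0))].$$
From the definition of $H$ in \eqref{eq1.9}, with all adjoint arguments frozen at their hat--values, $\hat f(t)-f(t)=\hat H(t)-H(t)-\hat\lambda(t)\bigtriangleup\hat g(t)-\hat p(t)\bigtriangleup\hat b(t)-\hat q(t)\bigtriangleup\hat\sigma(t)-\int_{\mathbb{R}_0}\hat r(t,a)\bigtriangleup\hat\theta(t,a)\nu(da)$. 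Concavity of $H$ in $(\boldsymbol{x},y,z,k,u)$ (hypothesis $(H_1)$) gives a pointwise lower bound for $\hat H(t)-H(t)$ by its linearization, introducing the terms $\tfrac{\partial\hat H}{\partial x}\bigtriangleup\hat X+\tfrac{\partial\hat H}{\partial x_1}\bigtriangleup\hat X_1+\tfrac{\partial\hat H}{\partial x_2}\bigtriangleup\hat X_2+\tfrac{\partial\hat H}{\partial y}\bigtriangleup\hat Y+\tfrac{\partial\hat H}{\partial z}\bigtriangleup\hat Z+\int\nabla_k\hat H\,\bigtriangleup\hat K\,\nu(da)+\tfrac{\partial\hat H}{\partial u}\bigtriangleup\hat u$. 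Concavity of $h$ gives $I_2\geq E[\hat\lambda(0)\bigtriangleup\hat Y(0)]$.

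Next I would run the standard duality computation via It\^o's formula on $[0,T]$ applied to the two products $\hat\lambda(t)\bigtriangleup\hat Y(t)$ and $\hat p(t)\bigtriangleup\hat X(t)$, using \eqref{eq1.10} together with the BSDE satisfied by $\bigtriangleup\hat Y$, and \eqref{eq1.11}--\eqref{eq1.12} together with the SDE satisfied by $\bigtriangleup\hat X$. The growth condition $(H_4)$ ensures the stochastic integrals are true martingales, so their expectations vanish, and after taking expectations the drift terms reproduce exactly $E[\hat\lambda(t)\bigtriangleup\hat g(t)]$, $E[\tfrac{\partial\hat H}{\partial y}\bigtriangleup\hat Y]$, $E[\tfrac{\partial\hat H}{\partial z}\bigtriangleup\hat Z]$, $E[\int\nabla_k\hat H\,\bigtriangleup\hat K\,\nu(da)]$ on the $\hat\lambda\bigtriangleup\hat Y$ side, and $E[\hat p\bigtriangleup\hat b]$, $E[\hat q\bigtriangleup\hat\sigma]$, $E[\int\hat r\,\bigtriangleup\hat\theta\,\nu(da)]$, and $E[\mu(t)\bigtriangleup\hat X(t)]$ on the $\hat p\bigtriangleup\hat X$ side. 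The $\mu$--term is where the delay structure enters: using Fubini, the shifts $s\mapsto s-\delta$, and the fact that $\bigtriangleup\hat X\equiv 0$ on $[-\delta,0]$, the contributions $\int_0^T\tfrac{\partial\hat H}{\partial x_1}(t+\delta)\bigtriangleup\hat X(t)\,dt$ and $\int_0^Te^{\rho t}\bigtriangleup\hat X(t)\int_t^{t+\delta}\tfrac{\partial\hat H}{\partial x_2}(s)e^{-\rho s}ds\,dt$ become, up to a boundary error that is $o(1)$ as $T\to\infty$, $\int_0^T\tfrac{\partial\hat H}{\partial x_1}(t)\bigtriangleup\hat X_1(t)\,dt$ and $\int_0^T\tfrac{\partial\hat H}{\partial x_2}(t)\bigtriangleup\hat X_2(t)\,dt$; the integrability $(H_5)$ legitimates these manipulations and dominated convergence.

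Assembling the two expansions and the concavity inequality produces boundary terms $E[\hat p(T)\bigtriangleup\hat X(T)]$ and $E[\hat\lambda(T)\bigtriangleup\hat Y(T)]-E[\hat\lambda(0)\bigtriangleup\hat Y(0)]$; passing to the limit along a subsequence and invoking $(H_3)$ discards these terms with the correct signs, yielding
$$J(\hat u)-J(u)\geq E\Big[\int_0^\infty \tfrac{\partial\hat H}{\partial u}(t)\,\bigtriangleup\hat u(t)\,dt\Big].$$
To conclude, I would combine $(H_1)$ with $(H_2)$: since $v\mapsto E[H(t,\boldsymbol{\hat X}(t),\hat Y(t),\hat Z(t),\hat K(t,\cdot),v,\hat\lambda(t),\hat p(t),\hat q(t),\hat r(t,\cdot))\mid\mathcal{E}_t]$ is concave and maximized at $v=\hat u(t)$ on the convex set $\mathcal{U}$, the first--order condition gives $E[\tfrac{\partial\hat H}{\partial u}(t)(v-\hat u(t))\mid\mathcal{E}_t]\leq 0$ for any $\mathcal{E}_t$--measurable $v\in\mathcal{U}$; taking $v=u(t)$ and integrating in $t$ makes the right--hand side above nonnegative after an outer expectation, so $J(\hat u)\geq J(u)$. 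The main obstacle I anticipate is the infinite--horizon limit: absolute convergence of all integrals, vanishing of the stochastic integrals in expectation, and the sign control of $E[\hat p(T)\bigtriangleup\hat X(T)]$ and $E[\hat\lambda(T)\bigtriangleup\hat Y(T)]$ as $T\to\infty$, which is exactly the role of the growth conditions $(H_4)$--$(H_5)$ and the transversality conditions $(H_3)$; the delay--Fubini step for $\mu$ is technical but standard.
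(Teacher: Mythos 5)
Your proposal is correct and follows essentially the same route as the paper's proof: the same decomposition $J(\hat u)-J(u)=I_1+I_2$, the same use of concavity of $h$ and $H$, It\^o's formula applied to $\hat\lambda\bigtriangleup\hat Y$ and $\hat p\bigtriangleup\hat X$ with $(H_4)$--$(H_5)$ guaranteeing the martingale property and convergence, the Fubini/time-shift treatment of the delayed $\mu$-terms, and $(H_3)$ to discard the boundary terms. Your concluding first-order-condition argument from $(H_2)$ on the convex set $\mathcal{U}$ is exactly the intended final step (the paper states it slightly more tersely), so there is nothing substantive to add.
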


\begin{proof}
Proof. Assume that $u\in\mathcal{A}$. We want to prove that $%
\begin{array}
[c]{c}%
J(\hat{u})-J(u)\geq0\text{, i.e. }\hat{u}\text{ is an optimal control.}%
\end{array}
$

We put
\begin{equation}
J(\hat{u})-J(u)=I_{1}+I_{2}, \label{eq2.5}%
\end{equation}

where%
\[
I_{1}=E[%
{\textstyle\int\limits_{0}^{\infty}}
\{\hat{f}(t)-f(t)\}\text{ }dt],
\]

and%
\[
I_{2}=E[h(\hat{Y}(0))-h(Y(0))].
\]

By the definition of $H$, we have%
\begin{equation}
I_{1}=E[%
{\textstyle\int\limits_{0}^{\infty}}
\{\bigtriangleup\hat{H}(t)-\bigtriangleup\hat{g}(t)\hat{\lambda}%
(t)-\bigtriangleup\hat{b}(t)\hat{p}(t)-\bigtriangleup\hat{\sigma}(t)\hat
{q}(t)-%
{\textstyle\int\limits_{\mathbb{R}_{0}}}
\bigtriangleup\hat{\theta}(t,a)\hat{r}(t,a)\nu(da)\}dt], \label{eq2.6}%
\end{equation}

where we have used the simplified notation%
\[%
\begin{array}
[c]{l}%
\hat{H}(t)=H(t,\boldsymbol{\hat{X}}(t),\hat{Y}(t),\hat{Z}(t),\hat{K}%
(t,\cdot),\hat{u}(t),\hat{\lambda}(t),\hat{p}(t),\hat{q}(t),\hat{r}%
(t,\cdot))\\
H(t)=H(t,\boldsymbol{X}(t),Y(t),Z(t),K(t,\cdot),u(t),\hat{\lambda}(t),\hat
{p}(t),\hat{q}(t),\hat{r}(t,\cdot))\text{ etc.}%
\end{array}
\]

Since $h$ is concave, we have%
\[
h(\hat{Y}(0))-h(Y(0))\geq h^{^{\prime}}(\hat{Y}(0))\bigtriangleup\hat
{Y}(0)=\hat{\lambda}(0)\bigtriangleup\hat{Y}(0).
\]

By ItÙ's formula, $(H_{4})$, $\left(  \ref{eq1.2}\right)  $ and $\left(
\ref{eq1.10}\right)  $, we have for all $T$
\begin{equation}%
\begin{array}
[c]{l}%
E[\hat{\lambda}(0)\bigtriangleup\hat{Y}(0)]=E[\hat{\lambda}(T)\bigtriangleup
\hat{Y}(T)-\int\limits_{0}^{T}\hat{\lambda}(t)d(\bigtriangleup\hat{Y}%
(t))-\int\limits_{0}^{T}\bigtriangleup\hat{Y}(t)d\hat{\lambda}(t)\\
-\int\limits_{0}^{T}\bigtriangleup\hat{Z}(t)\tfrac{\partial\hat{H}}{\partial
z}(t)dt-\int\limits_{0}^{T}\int\limits_{\mathbb{R}_{0}}\nabla_{k}\hat
{H}(t,a)\bigtriangleup\hat{K}(t,a)\nu(da)dt].
\end{array}
\label{eq2.7}%
\end{equation}

By $(H_{4})$ all the local martingales involved in $\left(  \ref{eq2.7}%
\right)  $ are martingales up to time $T$, for all $T<\infty$.

Therefore, letting $T\rightarrow\infty$, we obtain by $\left(  \ref{2.5}%
\right)  $%
\begin{equation}%
\begin{array}
[c]{c}%
E[\hat{\lambda}(0)\bigtriangleup\hat{Y}(0)]=\underset{T\rightarrow\infty}%
{\lim}E\mathbf{\ [}\hat{\lambda}(T)\bigtriangleup\hat{Y}(T)]-E[\int
\limits_{0}^{\infty}\{-\hat{\lambda}(t)\bigtriangleup\hat{g}(t)\\
+\bigtriangleup\hat{Y}(t)\tfrac{\partial\hat{H}}{\partial y}(t)+\bigtriangleup
\hat{Z}(t)\tfrac{\partial\hat{H}}{\partial z}(t)+\int\limits_{\mathbb{R}_{0}%
}\nabla_{k}\hat{H}(t,a)\bigtriangleup\hat{K}(t,a)\nu(da)\}dt]\text{.}%
\end{array}
\label{eq2.8}%
\end{equation}

Combining $\left(  \ref{eq2.6}\right)  -\left(  \ref{eq2.8}\right)  $, we
obtain%
\[%
\begin{array}
[c]{l}%
J(\hat{u})-J(u)\geq\underset{T\rightarrow\infty}{\lim}E\mathbf{\ [}%
\hat{\lambda}(T)\bigtriangleup\hat{Y}(T)]+E[\int\limits_{0}^{\infty
}\{\bigtriangleup\hat{H}(t)-\bigtriangleup\hat{b}(t)\hat{p}(t)-\bigtriangleup
\hat{\sigma}(t)\hat{q}(t)\\
-%
{\textstyle\int\limits_{\mathbb{R}_{0}}}
\bigtriangleup\hat{\theta}(t,a)\hat{r}(t,a)\nu(da)-\bigtriangleup\hat
{Y}(t)\tfrac{\partial\hat{H}}{\partial y}(t)-\bigtriangleup\hat{Z}%
(t)\tfrac{\partial\hat{H}}{\partial z}(t)-\int\limits_{\mathbb{R}_{0}}%
\nabla_{k}\hat{H}(t,a)\bigtriangleup\hat{K}(t,a)\nu(da)\}dt]\text{.}%
\end{array}
\]

Since $H$ is concave, we have%
\begin{equation}%
\begin{array}
[c]{l}%
J(\hat{u})-J(u)\geq\underset{T\rightarrow\infty}{\lim}E\mathbf{\ [}%
\hat{\lambda}(T)\bigtriangleup\hat{Y}(T)]+E[\int\limits_{0}^{\infty
}\{\bigtriangleup\hat{X}(t)\frac{\partial\hat{H}}{\partial x}%
(t)+\bigtriangleup\hat{X}_{1}(t)\frac{\partial\hat{H}}{\partial x_{1}}(t)\\
+\bigtriangleup\hat{X}_{2}(t)\frac{\partial\hat{H}}{\partial x_{2}%
}(t)+\bigtriangleup\hat{u}(t)\frac{\partial\hat{H}}{\partial u}%
(t)-\bigtriangleup\hat{b}(t)\hat{p}(t)-\bigtriangleup\hat{\sigma}(t)\hat
{q}(t)-\int\limits_{\mathbb{R}_{0}}\bigtriangleup\hat{\theta}(t,a)\hat
{r}(t,a)\nu(da)\}dt]\text{.}%
\end{array}
\label{eq2.9}%
\end{equation}
Applying now $(H_{1})$, $(H_{4})$ and $(H_{5})$ together with the ItÙ formula
to$%
\begin{array}
[c]{c}%
\mathbf{\ }\hat{p}(t)\bigtriangleup\hat{X}(t),
\end{array}
$ we get%
\begin{equation}%
\begin{array}
[c]{l}%
0\geq\underset{T\rightarrow\infty}{\lim}E\mathbf{\ [\ }\hat{p}%
(T)\bigtriangleup\hat{X}(T)]\\
=E[%
{\textstyle\int\limits_{0}^{\infty}}
\{\bigtriangleup\hat{b}(t)\hat{p}(t)-\bigtriangleup\hat{X}(t)E[\hat{\mu
}(t)\mid\mathcal{F}_{t}]+\bigtriangleup\hat{\sigma}(t)\hat{q}(t)+\int
\limits_{\mathbb{R}_{0}}\bigtriangleup\hat{\theta}(t,a)\hat{r}(t,a)\nu
(da)\}dt]\\
=E[%
{\textstyle\int\limits_{0}^{\infty}}
\{\bigtriangleup\hat{b}(t)\hat{p}(t)-\bigtriangleup\hat{X}(t)\hat{\mu
}(t)+\bigtriangleup\hat{\sigma}(t)\hat{q}(t)+%
{\textstyle\int\limits_{\mathbb{R}_{0}}}
\bigtriangleup\hat{\theta}(t,a)\hat{r}(t,a)\nu(da)\}dt]\text{.}%
\end{array}
\label{eq2.10}%
\end{equation}

By the definition $\left(  \ref{eq1.12}\right)  $ of $\hat{\mu}$ , we have%
\begin{equation}%
\begin{array}
[c]{l}%
E[\mathbf{\ }\int\limits_{0}^{\infty}\bigtriangleup\hat{X}(t)\hat{\mu
}(t)dt]=\underset{T\rightarrow\infty}{\lim}\mathbf{\ }E[\int\limits_{\delta
}^{T+\delta}\bigtriangleup\hat{X}(t-\delta)\hat{\mu}(t-\delta)dt)]\\
=\underset{T\rightarrow\infty}{\lim}E[-%
{\textstyle\int\limits_{\delta}^{T+\delta}}
\frac{\partial\hat{H}}{\partial x}(t-\delta)\bigtriangleup\hat{X}(t-\delta)dt-%
{\textstyle\int\limits_{\delta}^{T+\delta}}
\frac{\partial\hat{H}}{\partial x_{1}}\left(  t\right)  \bigtriangleup\hat
{X}_{1}(t)dt\\
-%
{\textstyle\int\limits_{\delta}^{T+\delta}}
(%
{\textstyle\int\limits_{t-\delta}^{t}}
\frac{\partial\hat{H}}{\partial x_{2}}\left(  s\right)  e^{-\rho s}%
ds)(e^{\rho(t-\delta)}\bigtriangleup\hat{X}(t-\delta))dt].
\end{array}
\label{eq2.11}%
\end{equation}
Using Fubini's theorem and the definition of $X_{2},$ we obtain%
\begin{equation}
\int\limits_{0}^{T}\frac{\partial\hat{H}}{\partial x_{2}}(s)\bigtriangleup
\hat{X}_{2}(s)ds=\int\limits_{\delta}^{T+\delta}(\int\limits_{t-\delta}%
^{t}\frac{\partial\hat{H}}{\partial x_{2}}\left(  s\right)  e^{-\rho
s}ds)e^{\rho(t-\delta)}\bigtriangleup\hat{X}(t-\delta)dt\text{.}
\label{eq2.12}%
\end{equation}

Combining $\left(  \ref{eq2.9}\right)  $\ with $\left(  \ref{eq2.10}\right)
-\left(  \ref{eq2.12}\right)  $, we deduce that%
\[%
\begin{array}
[c]{l}%
J(\hat{u})-J(u)\geq\underset{T\rightarrow\infty}{\lim}E\mathbf{\ [}%
\hat{\lambda}(T)\bigtriangleup\hat{Y}(T)]-\underset{T\rightarrow\infty}{\lim
}E[\hat{p}(T)\bigtriangleup\hat{X}(T)]+E[\int\limits_{0}^{\infty
}\bigtriangleup\hat{u}(t)\frac{\partial\hat{H}}{\partial u}(t)dt]\\
=\underset{T\rightarrow\infty}{\lim}E[\mathbf{\ }\hat{\lambda}%
(T)\bigtriangleup\hat{Y}(T)]-\underset{T\rightarrow\infty}{\lim}%
E\mathbf{\ [}\hat{p}(T)\bigtriangleup\hat{X}(T)]+E[\int\limits_{0}^{\infty
}E\{\bigtriangleup\hat{u}(t)\frac{\partial\hat{H}}{\partial u}(t)\mid
\mathcal{E}_{t}\}dt]\text{.}%
\end{array}
\]

Then%
\[%
\begin{array}
[c]{l}%
J(\hat{u})-J(u)\geq\underset{T\rightarrow\infty}{\lim}\mathbf{\ }%
E[\hat{\lambda}(T)(\hat{Y}(T)-Y(T))]-\underset{T\rightarrow\infty}{\lim
}E\mathbf{\ [}\hat{p}(T)\bigtriangleup\hat{X}(T)]\\
\text{ \ \ \ \ \ \ \ \ \ \ \ \ \ \ \ \ \ \ \ }+E[\int\limits_{0}^{\infty
}E\{\frac{\partial\hat{H}}{\partial u}(t)\mid\mathcal{E}_{t}\}\bigtriangleup
\hat{u}(t)dt]\text{.}%
\end{array}
\]

By assumptions $(H_{1})$ and $(H_{3})$, we conclude $%
\begin{array}
[c]{c}%
J(\hat{u})-J(u)\geq0\text{,}%
\end{array}
$i.e. $\hat{u}$ is an optimal control.
\end{proof}

\section{Necessary conditions of optimality for partial information}

A drawback of the previous section is that the concavity condition is not
always satisfied in applications. In view of this, it is of interest to obtain
conditions for the existence of an optimal control with partial information
where concavity is not needed. We assume the following:

\begin{description}
\item[$(A_{1})$\ ] For all $u\in\mathcal{A}$ and all $\beta\in\mathcal{A}$
bounded, there exists $\mathcal{\epsilon}\mathcal{>}0$ such that
\end{description}

\[
u+s\beta\in\mathcal{A}\text{ \ \ \ \ for all }s\in(-\mathcal{\epsilon
},\mathcal{\epsilon})\text{.}%
\]

This implies in particular that the corresponding solution $X^{u+s\beta}(t)$
of $\left(  \ref{eq1.1}\right)  -\left(  \ref{eq1.5}\right)  $ exists.

\begin{description}
\item[$(A_{2})$] For all $t_{0}>0$, $h>0$ and all bounded $\mathcal{E}_{t_{0}%
}$-measurable random variables $\alpha$, the control process $\beta(t)$
defined by
\end{description}

\begin{equation}
\beta(t)=\alpha1_{\left[  t_{0},t_{0}+h\right)  }(t)\text{ } \label{3.1}%
\end{equation}

belongs to $\mathcal{A}$.

\begin{description}
\item[$(A_{3})$] The following derivative processes exist%
\begin{equation}
\xi(t):=\tfrac{d}{ds}X^{u+s\beta}(t)\mid_{_{s=0}} \label{3.2}%
\end{equation}

\end{description}

\begin{equation}
\phi(t):=\tfrac{d}{ds}Y^{u+s\beta}(t)\mid_{_{s=0}} \label{3.3}%
\end{equation}

\begin{equation}
\eta(t):=\tfrac{d}{ds}Z^{u+s\beta}(t)\mid_{_{s=0}} \label{3.4}%
\end{equation}

\begin{equation}
\psi(t,a):=\tfrac{d}{ds}K^{u+s\beta}(t,a)\mid_{_{s=0}} \label{3.5}%
\end{equation}

\begin{description}
\item[$(A_{4})$ ] We also assume that
\end{description}

\begin{equation}%
\begin{array}
[c]{c}%
E[\int\limits_{0}^{\infty}\{\left\vert \frac{\partial f}{\partial x}%
(t)\xi(t)\right\vert +\left\vert \frac{\partial f}{\partial x_{1}}%
(t)\xi(t-\delta)\right\vert +\left\vert \frac{\partial f}{\partial x_{2}%
}(t)\int\limits_{t-\delta}^{t}e^{-\rho(t-r)}\xi(r)dr\right\vert +\left\vert
\frac{\partial f}{\partial y}(t)\phi(t)\right\vert \\
+\left\vert \frac{\partial f}{\partial z}(t)\eta(t)\right\vert +\left\vert
\frac{\partial f}{\partial u}(t)\beta(t)\right\vert +\int\limits_{\mathbb{R}%
_{0}}\left\vert \nabla_{k}f(t,a)\psi(t,a)\right\vert \nu(da)\}dt]<\infty.
\end{array}
\label{3.6}%
\end{equation}

We can see that%

\[
\tfrac{d}{ds}X_{1}^{u+s\beta}(t)\mid_{_{s=0}}=\xi(t-\delta)
\]

and%

\[
\frac{d}{ds}X_{2}^{u+s\beta}(t)\mid_{_{s=0}}=%
{\textstyle\int\limits_{t-\delta}^{t}}
e^{-\rho(t-r)}\xi(r)dr\text{.\ }%
\]

Note that
\[
\xi(t)=0\text{ for }t\in\left[  -\delta,0\right]  \text{.}%
\]

\begin{theorem}
Assume that $(A_{1})-(A_{4})$ hold. Suppose that $\hat{u}\in\mathcal{A}$ with
corresponding solutions $\boldsymbol{\hat{X}}(t),\hat{Y}(t),\hat{Z}(t),\hat
{K}(t,\cdot),\hat{\lambda}(t),\hat{p}(t),\hat{q}(t)$ and $\hat{r}(t,\cdot)$ of
equations $\left(  \ref{eq1.1}\right)  $, $\left(  \ref{eq1.2}\right)  $,
$\left(  \ref{eq1.10}\right)  $ and $\left(  \ref{eq1.11}\right)  $.

Assume that $\left(  \ref{eq2.4}\right)  $ and the following transversality
conditions hold:$\smallskip\smallskip$

\begin{description}
\item[$(H_{6})$]
\begin{align*}
&  \underset{T\rightarrow\infty}{\lim}E[\mathbf{\ }\hat{p}(T)\xi
(T)]=0\text{,}\\
&  \underset{T\rightarrow\infty}{\lim}E[\hat{\lambda}(T)\phi(T)]=0\text{.}%
\end{align*}

\item[$(H_{7})$] Moreover, assume that the following growth condition holds%
\[%
\begin{array}
[c]{l}%
E[\int\limits_{0}^{T}\{\hat{\lambda}^{2}(t)(\eta^{2}(t)+\int
\limits_{\mathbb{R}_{0}}\psi^{2}(t,a)\nu(da))+\phi^{2}(t)((\frac{\partial
\hat{H}}{\partial z})^{2}(t)+\int\limits_{\mathbb{R}_{0}}\nabla_{k}\hat{H}%
^{2}(t,a)\nu(da))\\
+\hat{p}^{2}(t)(\frac{\partial\sigma}{\partial x}(t)\xi(t)+\frac
{\partial\sigma}{\partial x_{1}}(t)\xi(t-\delta)+\frac{\partial\sigma
}{\partial x_{2}}(t)\int\limits_{t-\delta}^{t}e^{-\rho(t-r)}\xi(r)dr+\frac
{\partial\sigma}{\partial u}(t)\beta(t))^{2}\\
+\hat{p}^{2}(t)(\int\limits_{\mathbb{R}_{0}}\{\frac{\partial\theta}{\partial
x}(t,a)\xi(t)+\frac{\partial\theta}{\partial x_{1}}(t,a)\xi(t-\delta
)+\frac{\partial\theta}{\partial x_{2}}(t,a)\int\limits_{t-\delta}^{t}%
e^{-\rho(t-r)}\xi(r)dr\text{ }\\
+\frac{\partial\theta}{\partial u}(t,a)\beta(t)\}^{2}\nu(da))\}dt]<\infty
,\text{ for all \ }T<\infty\text{.}%
\end{array}
\]

\end{description}

Then the following assertions are equivalent.

\begin{description}
\item[$(i)$ ] For all bounded $\beta\in\mathcal{A}$,%
\[
\frac{d}{ds}J(\hat{u}+s\beta)\mid_{_{s=0}}=0\text{.}%
\]

\item[$(ii)$] For all $t\in\lbrack0,\infty)$,%
\[
E[\frac{\partial}{\partial u}H(t,\boldsymbol{\hat{X}}(t),\hat{Y}(t),\hat
{Z}(t),\hat{K}(t,\cdot),u,\hat{\lambda}(t),\hat{p}(t),\hat{q}(t),\hat
{r}(t,\cdot))\mid\mathcal{E}_{t}{\mathcal{]}}_{u=\hat{u}(t)}=0\text{.}%
\]

\end{description}
\end{theorem}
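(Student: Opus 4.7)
The plan is to mimic the computation in the sufficient maximum principle, but using the derivative processes $\xi,\phi,\eta,\psi$ from $(A_3)$ in place of the finite differences $\triangle\hat X,\triangle\hat Y,\triangle\hat Z,\triangle\hat K$. After reducing $\frac{d}{ds}J(\hat u+s\beta)\mid_{s=0}$ to an expression of the form $E[\int_0^\infty E[\tfrac{\partial \hat H}{\partial u}(t)\mid\mathcal{E}_t]\beta(t)\,dt]$, the equivalence $(i)\Leftrightarrow(ii)$ will follow from the freedom to choose $\beta$ provided by $(A_1)$ and $(A_2)$.

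First I would differentiate under the integral (justified by $(A_4)$) to get
\[
\tfrac{d}{ds}J(\hat u+s\beta)\mid_{s=0}=E\Bigl[\int_0^\infty\!\Bigl\{\tfrac{\partial f}{\partial x}\xi+\tfrac{\partial f}{\partial x_1}\xi(t-\delta)+\tfrac{\partial f}{\partial x_2}\!\!\int_{t-\delta}^t\!\!e^{-\rho(t-r)}\xi(r)dr+\tfrac{\partial f}{\partial y}\phi+\tfrac{\partial f}{\partial z}\eta+\!\!\int_{\RB_0}\!\!\nabla_k f\,\psi\,\nu(da)+\tfrac{\partial f}{\partial u}\beta\Bigr\}dt+h'(\hat Y(0))\phi(0)\Bigr].
\]
Next, I replace each derivative of $f$ by the corresponding derivative of $\hat H$ minus the contributions from $g\hat\lambda+b\hat p+\sigma\hat q+\int\theta\hat r\,\nu(da)$, exactly as in the proof of Theorem~2.1.

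The two Itô computations are the heart of the argument. Applying Itô's formula to $\hat\lambda(t)\phi(t)$ on $[0,T]$ (using equation $(\ref{eq1.10})$ together with the linear BSDE satisfied by $(\phi,\eta,\psi)$ obtained by differentiating $(\ref{eq1.2})$ in $s$), taking expectations (all local martingale parts are martingales by $(H_7)$ and $(\ref{eq2.4})$), and using $\hat\lambda(0)=h'(\hat Y(0))$ together with the transversality $\lim_T E[\hat\lambda(T)\phi(T)]=0$ from $(H_6)$, yields an expression for $E[h'(\hat Y(0))\phi(0)]$ in terms of the $g$- and $H$-derivatives. Applying Itô to $\hat p(t)\xi(t)$ on $[0,T]$, using $(\ref{eq1.11})$, the linear forward SDE satisfied by $\xi$, and the transversality $\lim_T E[\hat p(T)\xi(T)]=0$, produces an identity for $E[\int_0^\infty\{\hat p\,\triangle b\text{-terms}+\hat q\,\triangle\sigma\text{-terms}+\int\hat r\,\triangle\theta\text{-terms}\}]$ against $\int_0^\infty\hat\mu(t)\xi(t)dt$. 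The delay bookkeeping then enters: exactly as in $(\ref{eq2.11})$--$(\ref{eq2.12})$, I would substitute $t\mapsto t-\delta$ in the $\tfrac{\partial H}{\partial x_1}$-piece of $\hat\mu$, and apply Fubini to the $\tfrac{\partial H}{\partial x_2}$-piece, to convert $E[\int\hat\mu\,\xi\,dt]$ into $E[\int\{\tfrac{\partial\hat H}{\partial x}\xi+\tfrac{\partial\hat H}{\partial x_1}\xi(t-\delta)+\tfrac{\partial\hat H}{\partial x_2}\int_{t-\delta}^te^{-\rho(t-r)}\xi(r)dr\}dt]$ (recall $\xi\equiv 0$ on $[-\delta,0]$).

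When these two identities are added to the expression for $\tfrac{d}{ds}J$, every term involving $\xi,\xi(\cdot-\delta),\xi_2,\phi,\eta,\psi$ cancels, leaving
\[
\tfrac{d}{ds}J(\hat u+s\beta)\mid_{s=0}=E\Bigl[\int_0^\infty\tfrac{\partial\hat H}{\partial u}(t)\beta(t)\,dt\Bigr]=E\Bigl[\int_0^\infty E\Bigl[\tfrac{\partial\hat H}{\partial u}(t)\mid\mathcal{E}_t\Bigr]\beta(t)\,dt\Bigr],
\]
the last equality by $\mathbb{E}$-predictability of $\beta$. From here, $(ii)\Rightarrow(i)$ is immediate. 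For $(i)\Rightarrow(ii)$ I would plug in $\beta(t)=\alpha\mathbf{1}_{[t_0,t_0+h)}(t)$ as in $(A_2)$ with $\alpha$ bounded and $\mathcal{E}_{t_0}$-measurable, divide by $h$, let $h\downarrow 0$, and invoke the Lebesgue differentiation theorem to obtain $E[\alpha\,E[\tfrac{\partial\hat H}{\partial u}(t_0)\mid\mathcal{E}_{t_0}]]=0$ for every such $\alpha$; varying $\alpha$ gives $(ii)$.

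The main obstacle I expect is the delay shift computation for the $\xi$-side: one has to verify that the change-of-variables $t\mapsto t-\delta$ and the Fubini step, which were carried out in the sufficient proof under the integrability $(H_4)$--$(H_5)$, still work at the level of the linearized variation $\xi$ under $(H_7)$ and $(A_4)$. Once the bookkeeping of the $x_1$- and $x_2$-terms matches the expansion of $\tfrac{d}{ds}J$, all other cancellations are essentially formal.
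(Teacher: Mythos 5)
Your proposal follows the paper's own proof essentially step for step: differentiate $J$ under the integral using $(A_4)$, rewrite the $f$-derivatives via the Hamiltonian, apply It\^o to $\hat{\lambda}(t)\phi(t)$ and $\hat{p}(t)\xi(t)$ with the transversality conditions $(H_6)$ and the growth condition $(H_7)$, handle the delayed terms by the same shift-and-Fubini bookkeeping as in $(\ref{eq2.11})$--$(\ref{eq2.12})$, and then localize with $\beta(t)=\alpha 1_{[t_0,t_0+h)}(t)$. The argument is correct and not a different route; your use of the tower property over $\mathcal{E}_t$ for the $(ii)\Rightarrow(i)$ direction is a slightly cleaner phrasing of the paper's closing approximation remark.
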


\begin{proof}
$(i)\Longrightarrow(ii)$:

In the following we use the short- hand notation$%
\begin{array}
[c]{c}%
\frac{\partial b}{\partial x_{i}}(t)=\frac{\partial}{\partial x_{i}%
}b(t,\mathbf{x},u(t))_{\mathbf{x}=\mathbf{X}(t)}\text{ etc; }i=1,2,3.
\end{array}
$

It follows from $\left(  \ref{eq1.1}\right)  $ that%
\[%
\begin{array}
[c]{l}%
d\xi(t)=\{\frac{\partial b}{\partial x}(t)\xi(t)+\frac{\partial b}{\partial
x_{1}}(t)\xi(t-\delta)+\frac{\partial b}{\partial x_{2}}(t)\int
\limits_{t-\delta}^{t}e^{-\rho(t-r)}\xi(r)dr+\frac{\partial b}{\partial
u}(t)\beta(t)\}dt\\
+\{\frac{\partial\sigma}{\partial x}(t)\xi(t)+\frac{\partial\sigma}{\partial
x_{1}}(t)\xi(t-\delta)+\frac{\partial\sigma}{\partial x_{2}}(t)\int
\limits_{t-\delta}^{t}e^{-\rho(t-r)}\xi(r)dr+\frac{\partial\sigma}{\partial
u}(t)\beta(t)\}dB(t)\\
+\int\limits_{\mathbb{R}_{0}}\{\frac{\partial\theta}{\partial x}%
(t,a)\xi(t)+\frac{\partial\theta}{\partial x_{1}}(t,a)\xi(t-\delta
)+\frac{\partial\theta}{\partial x_{2}}(t,a)\int\limits_{t-\delta}^{t}%
e^{-\rho(t-r)}\xi(r)dr+\frac{\partial\theta}{\partial u}(t,a)\beta
(t)\}\tilde{N}(dt,da)\text{,}%
\end{array}
\]

and%
\[%
\begin{array}
[c]{c}%
d\phi(t)=\{-\frac{\partial g}{\partial x}(t)\xi(t)-\frac{\partial g}{\partial
x_{1}}(t)\xi(t-\delta)-\frac{\partial g}{\partial x_{2}}(t)\int
\limits_{t-\delta}^{t}e^{-\rho(t-r)}\xi(r)dr-\frac{\partial g}{\partial
y}(t)\phi(t)\\
-\frac{\partial g}{\partial u}(t)\beta(t)-\frac{\partial g}{\partial z}%
(t)\eta(t)\}dt+\eta(t)dB(t)+\int\limits_{\mathbb{R}_{0}}\psi(t,a)\tilde
{N}(dt,da)\text{,}%
\end{array}
\]

Suppose that assertion $(i)$ holds. Then by $(A_{4})$ and dominated
convergence%
\begin{equation}%
\begin{array}
[c]{l}%
0=\frac{d}{ds}J(\hat{u}+s\beta)\mid_{_{s=0}}\\
=E[\int\limits_{0}^{\infty}\{\frac{\partial f}{\partial x}(t)\xi
(t)+\frac{\partial f}{\partial x_{1}}(t)\xi(t-\delta)+\frac{\partial
f}{\partial x_{2}}(t)\int\limits_{t-\delta}^{t}e^{-\rho(t-r)}\xi
(r)dr+\frac{\partial f}{\partial y}(t)\phi(t)\\
+\frac{\partial f}{\partial z}(t)\eta(t)+\frac{\partial f}{\partial u}%
(t)\beta(t)+\int\limits_{\mathbb{R}_{0}}\nabla_{k}f(t,a)\psi(t,a)\nu
(da)\}dt+h^{^{\prime}}(\hat{Y}(0))\phi(0)]\text{.}%
\end{array}
\label{3.7}%
\end{equation}
$\ \ \ \ $

We know by the definition of $H$ that$\ \ \ \ \ \ \ \ \ $%
\[
\frac{\partial f}{\partial x}(t)=\frac{\partial H}{\partial x}(t)-\frac
{\partial g}{\partial x}(t)\lambda(t)-\frac{\partial b}{\partial
x}(t)p(t)-\frac{\partial\sigma}{\partial x}(t)q(t)-\int\limits_{\mathbb{R}%
_{0}}\frac{\partial\theta}{\partial x}(t,a)r(t,a)\nu(da)
\]
$\ \ $

and similarly for $\frac{\partial f}{\partial x_{1}}(t),\frac{\partial
f}{\partial x_{2}}(t)$, $\frac{\partial f}{\partial u}(t)$, $\frac{\partial
f}{\partial y}(t)$, $\frac{\partial f}{\partial z}(t)$ and $\nabla_{k}f(t,a)$.

By the ItÙ formula and $(H_{7})$, the local martingales which appear after
integration by parts of the process $\hat{\lambda}(t)\phi(t)$ are martingales,
and we get%
\begin{equation}%
\begin{array}
[c]{l}%
E[h^{^{\prime}}(\hat{Y}(0)\phi(0))]=E[\hat{\lambda}(0)\phi(0)]\\
=\underset{T\rightarrow\infty}{\lim}E[\hat{\lambda}(T)\phi(T)]\\
-\underset{T\rightarrow\infty}{\lim}E[\int\limits_{0}^{T}\{\hat{\lambda
}(t)(-\frac{\partial g}{\partial x}(t)\xi(t)-\frac{\partial g}{\partial x_{1}%
}(t)\xi(t-\delta)-\frac{\partial g}{\partial x_{2}}(t)\int\limits_{t-\delta
}^{t}e^{-\rho(t-r)}\xi(r)dr-\frac{\partial g}{\partial y}(t)\phi(t)\\
-\frac{\partial g}{\partial z}(t)\eta(t)-\frac{\partial g}{\partial u}%
(t)\beta(t))+\phi(t)\frac{\partial H}{\partial y}(t)+\eta(t)\frac{\partial
H}{\partial z}(t)+\int\limits_{\mathbb{R}_{0}}\nabla_{k}H(t,a)\psi
(t,a)\nu(da)\}dt]\text{.}%
\end{array}
\label{3.8}%
\end{equation}

Substituting $\left(  \ref{3.8}\right)  $ into $\left(  \ref{3.7}\right)  $ we
get%
\begin{equation}%
\begin{array}
[c]{l}%
0=\frac{d}{ds}J(\hat{u}+s\beta)\mid_{_{s=0}}\\
=E[\int\limits_{0}^{\infty}\{\frac{\partial f}{\partial x}(t)\xi
(t)+\frac{\partial f}{\partial x_{1}}(t)\xi(t-\delta)+\frac{\partial
f}{\partial x_{2}}(t)\int\limits_{t-\delta}^{t}e^{-\rho(t-r)}\xi
(r)dr+\frac{\partial f}{\partial y}(t)\phi(t)\\
+\frac{\partial f}{\partial z}(t)\eta(t)+\frac{\partial f}{\partial u}%
(t)\beta(t)+\int\limits_{\mathbb{R}_{0}}\nabla_{k}f(t,a)\psi(t,a)\nu(da)\\
-\hat{\lambda}(t)(-\frac{\partial g}{\partial x}(t)\xi(t)-\frac{\partial
g}{\partial x_{1}}(t)\xi(t-\delta)-\frac{\partial g}{\partial x_{2}}%
(t)\int\limits_{t-\delta}^{t}e^{-\rho(t-r)}\xi(r)dr-\frac{\partial g}{\partial
y}(t)\phi(t)\\
-\frac{\partial g}{\partial z}(t)\eta(t)-\frac{\partial g}{\partial u}%
(t)\beta(t))+\phi(t)\frac{\partial H}{\partial y}(t)+\eta(t)\frac{\partial
H}{\partial z}(t)+\int\limits_{\mathbb{R}_{0}}\nabla_{k}H(t,a)\psi
(t,a)\nu(da)\}dt]\text{.}%
\end{array}
\label{3.9}%
\end{equation}

Applying the ItÙ formula to the process$%
\begin{array}
[c]{c}%
\hat{p}(t)\xi(t)
\end{array}
$ and using $(H_{7})$, we get%
\begin{equation}%
\begin{array}
[c]{l}%
0=\underset{T\rightarrow\infty}{\lim}E[\hat{p}(T)\mathbf{\ }\xi(T)]\\
=E[\int\limits_{0}^{\infty}\hat{p}(t)\mathbf{\ \{}\frac{\partial b}{\partial
x}(t)\xi(t)+\frac{\partial b}{\partial x_{1}}(t)\xi(t-\delta)+\frac{\partial
b}{\partial x_{2}}(t)\int\limits_{t-\delta}^{t}e^{-\rho(t-r)}\xi
(r)dr+\frac{\partial b}{\partial u}(t)\beta(t)\}dt\\
+\int\limits_{0}^{\infty}\xi(t)E[\mu(t)\mid\mathcal{F}_{t}]dt+\int
\limits_{0}^{\infty}\hat{q}(t)\{\frac{\partial\sigma}{\partial x}%
(t)\xi(t)+\frac{\partial\sigma}{\partial x_{1}}(t)\xi(t-\delta)+\frac
{\partial\sigma}{\partial x_{2}}(t)\int\limits_{t-\delta}^{t}e^{-\rho(t-r)}%
\xi(r)dr+\frac{\partial\sigma}{\partial u}(t)\beta(t)\}dt\\
+\int\limits_{0}^{\infty}\int\limits_{\mathbb{R}_{0}}\hat{r}(t,a)\{\frac
{\partial\theta}{\partial x}(t,a)\xi(t)+\frac{\partial\theta}{\partial x_{1}%
}(t,a)\xi(t-\delta)+\frac{\partial\theta}{\partial x_{2}}(t,a)\int
\limits_{t-\delta}^{t}e^{-\rho(t-r)}\xi(r)dr+\frac{\partial\theta}{\partial
u}(t,a)\beta(t)\}\nu(da)dt]\\
=-\frac{d}{ds}J(\hat{u}+s\beta)\mid_{s=0}+E[\int\limits_{0}^{\infty}%
\frac{\partial H}{\partial u}(t)\beta(t)dt]\text{.}%
\end{array}
\label{3.10}%
\end{equation}

Adding $\left(  \ref{3.9}\right)  $ and $\left(  \ref{3.10}\right)  $ we
obtain%
\[
E[\int\limits_{0}^{\infty}\frac{\partial H}{\partial u}(t)\beta
(t)dt]=0\text{.}%
\]

Now apply this to%
\[
\beta(t)=\alpha1_{\left[  s,s+h\right)  }(t),\text{ }%
\]

where $\alpha$ is bounded and $\mathcal{E}_{t_{0}}$-mesurable, $s\geq t_{0}$.
Then we get%
\[
E[\int\limits_{s}^{s+h}\frac{\partial H}{\partial u}(s)ds\text{ }\alpha]=0.
\]

Differentiating with respect to $h$ at $h=0$ we obtain%
\[
E[\frac{\partial H}{\partial u}(s)\text{ }\alpha]=0.
\]

Since this holds for all $s\geq t_{0}$ and all $\alpha$, we conclude
\[
E[\frac{\partial H}{\partial u}(t_{0})\mid\mathcal{E}_{t_{0}}]=0.
\]
This proves that $(i)$ implies $(ii)$.

$(ii)\Longrightarrow(i)$:

The argument above shows that%
\[
\frac{d}{ds}J(u+s\beta)\mid_{_{s=0}}=E[\int\limits_{0}^{\infty}\frac{\partial
H}{\partial u}(t)\beta(t)dt],
\]

for all $u$, $\beta\in\mathcal{A}$ with $\beta$ bounded. So to complete the
proof we use that every bounded $\beta\in\mathcal{A}$ can be approximated by
linear combinations of controls $\beta$ of the form $\left(  \ref{3.1}\right)
$. We omit the details.
\end{proof}

\section{ Application to optimal consumption with respect to recursive
utility}

\subsection{A general optimal recursive utility problem}

Let $X(t)=X^{(c)}(t)$ be a cash flow modeled by%

\begin{equation}
\left\{
\begin{array}
[c]{l}%
dX(t)=X(t-\delta)[b_{0}(t)dt+\sigma_{0}(t)dB(t)+%
{\displaystyle\int\limits_{\mathbb{R}_{0}}}
\gamma(t,a)\tilde{N}(dt,da)]-c(t)dt;t\geq0,\\
X(0)=x>0,
\end{array}
\right.  \label{4.1}%
\end{equation}

where $b_{0}(t)$, $\sigma_{0}(t)$ and $\gamma(t,a)$ are given bounded
$\mathbb{F}$-predictable processes, $\delta\geq0$ is a fixed delay and
$\gamma(t,a)>-1$ for all $(t,a)\in\left[  0,\infty\right)  \times%
\mathbb{R}
$.

The process $u(t)=c(t)\geq0$ is our control process, interpreted as our
relative consumption rate such that $X^{(c)}(t)>0$ for all $t\geq0$. We let
$\mathcal{A}$ denote the family of all $\mathbb{E}$-predictable relative
consumption rates. To every $c\in\mathcal{A}$ we associate a recursive utility
process $Y^{(c)}(t)=Y(t)$ defined as the solution of the infinite horizon BSDE%

\begin{equation}
Y(t)=E[Y(T)+%
{\textstyle\int\limits_{t}^{T}}
g\left(  s,Y(s),c(s)\right)  ds\mid\mathcal{F}_{t}]\text{ for all }t\leq
T\text{,} \label{4.2}%
\end{equation}

valid for all deterministic $T<\infty$. The number $Y^{(c)}(0)$ is called the
recursive utility of consumption process $c(t)$; $t\geq0$ (See e.g. Duffie \&
Epstein $(1992),$ \cite{DE}).

Suppose the solution $(Y,Z,K)$ of the infinite horizon BSDE $\left(
\ref{4.2}\right)  $ satisfies the condition $\left(  \ref{eq1.4}\right)  $ and
let $c(s);s\geq0$ be the consumption rate.

We assume that the function $%
\begin{array}
[c]{c}%
g(t,y,c):%
\mathbb{R}
_{+}^{3}\rightarrow%
\mathbb{R}%
\end{array}
$ satisfies the following conditions:

\begin{enumerate}
\item $g(t,y,c)$ is concave with respect to $y$ and $c$

\item
\begin{equation}%
{\textstyle\int\limits_{0}^{T}}
E\left[  \left\vert g(s,Y(s),c(s))\right\vert \right]  ds<\infty,\text{ for
all }c\in\mathcal{A}\text{, }T<\infty. \label{4.3}%
\end{equation}

\item $\frac{\partial}{\partial c}g(t,y,c)$ has an inverse:
\end{enumerate}

\[
I(t,v,y)=\left\{
\begin{array}
[c]{l}%
0\text{ \ \ \ \ \ \ \ \ \ \ \ \ \ \ \ \ \ \ \ \ \ \ if }v\geq v_{0}(t,y),\\
(\dfrac{\partial}{\partial c}g(t,y,c))^{-1}(v)\text{ if }0\leq v\leq
v_{0}(t,y),
\end{array}
\right.
\]

where $%
\begin{array}
[c]{c}%
v_{0}(t,y)=\frac{\partial}{\partial c}g(t,y,0)
\end{array}
$.

We want to maximize the recursive utility $Y^{(c)}(0)$, i.e. we want to find
$c^{\ast}\in\mathcal{A}$ such that%

\begin{equation}
\underset{c\in\mathcal{A}}{\sup}Y^{(c)}(0)=Y^{(c^{\ast})}(0)\text{.}
\label{4.4}%
\end{equation}

We call such a process $c^{\ast}$ an optimal recursive utility consumption
rate. \newline\newline We see that the problem $\left(  \ref{4.4}\right)  $ is
a special case of problem $\left(  \ref{eq1.8}\right)  $ with%

\[
J(u)=Y(0),
\]

$%
\begin{array}
[c]{c}%
f=0
\end{array}
$, $%
\begin{array}
[c]{c}%
h(y)=y
\end{array}
$, $%
\begin{array}
[c]{c}%
u=c
\end{array}
$ and%

\[%
\begin{array}
[c]{l}%
b(t,\boldsymbol{x},c)=x_{1}b_{0}(t)-c,\\
\sigma(t,\boldsymbol{x},u)=x_{1}\sigma_{0}(t),\\
\theta(t,\boldsymbol{x},u,a)=x_{1}\gamma(t,a).
\end{array}
\]

In this case the Hamiltonian defined in $\left(  \ref{eq1.9}\right)  $ takes
the form%

\begin{align}
H(t,\boldsymbol{x},y,z,c,\lambda,p,q,r(\cdot))  &  =\lambda g(t,y,c)+\left(
x_{1}b_{0}(t)-c\right)  p\nonumber\\
&  +x_{1}\sigma_{0}(t)q+x_{1}\int\limits_{\mathbb{R}_{0}}\gamma(t,a)r(a)\nu
(da). \label{4.5}%
\end{align}

Maximizing $E[H\mid\mathcal{E}_{t}]$ as a function of $c$\ gives the first
order condition%

\begin{equation}
E[\lambda(t)\frac{\partial g}{\partial c}(t,Y(t),c(t))\mid\mathcal{E}%
_{t}]=E[p(t)\mid\mathcal{E}_{t}], \label{4.6}%
\end{equation}

for an optimal $c(t)$.

The pair of adjoint processes $\left(  \ref{eq1.10}\right)  -\left(
\ref{eq1.11}\right)  $ is given by%

\begin{equation}
\left\{
\begin{array}
[c]{l}%
d\lambda(t)=\lambda(t)\dfrac{\partial g}{\partial y}(t,Y(t),c(t))dt,\\
\lambda(0)=1,
\end{array}
\right.  \label{4.7}%
\end{equation}

and%

\begin{equation}
dp(t)=E[\mu(t)\mid\mathcal{F}_{t}]dt+q(t)dB(t)+\int\limits_{\mathbb{R}_{0}%
}r(t,a)\tilde{N}(dt,da);t\in\left[  0,\infty\right)  , \label{4.8}%
\end{equation}

where%
\begin{equation}
\mu(t)=-[b_{0}(t+\delta)p(t+\delta)+\sigma_{0}(t+\delta)q(t+\delta
)+\int\limits_{\mathbb{R}_{0}}\gamma(t+\delta,a)r(t+\delta,a)\nu(da)].
\label{4.9}%
\end{equation}

with terminal condition as in $\left(  \ref{eq1.4}\right)  $, i.e.%

\begin{equation}
E\underset{t\geq0}{[\sup\text{ }}e^{\kappa t}p^{2}(t)+%
{\textstyle\int\limits_{0}^{\infty}}
e^{\kappa s}(q^{2}(s)+%
{\textstyle\int\limits_{\mathbb{R}_{0}}}
r^{2}\left(  s,a\right)  \nu(da))ds]<\infty,\nonumber
\end{equation}
for all constants $\kappa>0$.

Equation $\left(  \ref{4.7}\right)  $ has the solution%
\begin{equation}
\lambda(t)=\exp(%
{\textstyle\int\limits_{0}^{t}}
\frac{\partial g}{\partial y}(s,Y(s),c(s))ds);t\geq0 \label{4.10}%
\end{equation}

which substituted into $\left(  \ref{4.6}\right)  $ gives%

\begin{equation}
E[\tfrac{\partial g}{\partial c}(t,Y(t),c(t))\exp(%
{\textstyle\int\limits_{0}^{t}}
\tfrac{\partial g}{\partial y}(s,Y(s),c(s))ds)\mid\mathcal{E}_{t}%
]=E[p(t)\mid\mathcal{E}_{t}]. \label{4.11}%
\end{equation}

We refer to Theorem $5.1$ in \cite{AHOP} for a proof of the existence of the
solution of the ABSDE $\left(  \ref{4.8}\right)  $.

\subsection{A solvable special case}

In order to get a solvable case we choose the driver $g$ in $\left(
\ref{4.2}\right)  $ to be of the form%
\begin{equation}
g(t,y,c)=-\alpha(t)y+\ln c, \label{4.12}%
\end{equation}

where $\alpha(t)\geq\alpha>0$ is an $\mathbb{F}$-adapted process.

We also choose%
\begin{equation}
\delta=0\text{ and }\mathcal{E}_{t}=\mathcal{F}_{t};t\geq0, \label{4.13}%
\end{equation}

and we represent the consumption rate $c(t)$ as%
\begin{equation}
c(t)=\rho(t)X(t)\text{,} \label{4.14}%
\end{equation}

where $%
\begin{array}
[c]{c}%
\rho(t)=\frac{c(t)}{X(t)}\geq0
\end{array}
$ is the relative consumption rate.

We restrict our attention to processes $c$ such that the wealth process,
solution of $\left(  \ref{4.1}\right)  $, is strictly positive and $\rho$ is
bounded away from $0$. This set of controls $\rho$ is denoted by $\mathcal{A}$.

The FBSDE system now has the form%
\begin{equation}
\left\{
\begin{array}
[c]{l}%
dX(t)=X(t^{-})[(b_{0}(t)-\rho(t))dt+\sigma_{0}(t)dB(t)+%
{\displaystyle\int\limits_{\mathbb{R}_{0}}}
\gamma(t,a)\tilde{N}(dt,da)];t\geq0,\\
X(0)=x>0,
\end{array}
\right.  \label{4.15}%
\end{equation}

and
\begin{equation}
Y(t)=Y^{(\rho)}(t)=E[Y(T)+%
{\textstyle\int\limits_{t}^{T}}
\left(  -\alpha(s)Y(s)+\ln\rho(s)X(s)\right)  ds\mid\mathcal{F}_{t}],
\label{4.16}%
\end{equation}

i.e.%
\begin{equation}
dY(t)=-\left(  -\alpha(t)Y(t)+\ln\rho(t)+\ln(X(t))\right)  dt+Z(t)dB(t);t\geq
0. \label{4.17}%
\end{equation}

We want to find $\rho^{\ast}\in\mathcal{A}$ such that%
\begin{equation}
\underset{\rho\in\mathcal{A}}{\sup}Y^{(\rho)}(0)=Y^{(\rho^{\ast})}(0).
\label{4.18}%
\end{equation}

In this case the Hamiltonian $\left(  \ref{eq1.9}\right)  $ gets the form%
\begin{align}
H(t,x,y,\rho,\lambda,p,q,r)  &  =\lambda(-\alpha(t)y+\ln\rho+\ln x)+x\left(
b_{0}(t)-\rho\right)  p\nonumber\\
&  +x\sigma_{0}(t)q+x\int\limits_{\mathbb{R}_{0}}\gamma(t,a)r(a)\nu(da).
\label{4.19}%
\end{align}

Maximizing $H$ with respect to $\rho$ gives the first order equation%
\begin{equation}
\lambda(t)\frac{1}{\rho(t)}=p(t)X(t), \label{4.20}%
\end{equation}

\bigskip

where, by $(1.10)-(1.11)$ $\lambda(t)$ and $(p(t),q(t),r(t,\cdot))$ satisfy
the FBSDEs%
\begin{equation}
\left\{
\begin{array}
[c]{l}%
d\lambda(t)=-\alpha(t)\lambda(t)dt,\\
\lambda(0)=1,
\end{array}
\right.  \label{4.21}%
\end{equation}

and%

\begin{equation}%
\begin{array}
[c]{l}%
dp(t)=-[\lambda(t)\frac{1}{X(t)}+\left(  b_{0}(t)-\rho(t)\right)
p(t)+\sigma_{0}(t)q(t)\\
+\int\limits_{\mathbb{R}_{0}}\gamma(t,a)r(a)\nu(da)]dt+q(t)dB(t)+\int
\limits_{\mathbb{R}_{0}}r(t,a)\tilde{N}(dt,da),
\end{array}
\label{4.22}%
\end{equation}

with terminal condition as in $\left(  \ref{eq1.4}\right)  $, i.e.%

\begin{equation}
E\underset{t\geq0}{[\sup\text{ }}e^{\kappa t}p^{2}(t)+%
{\textstyle\int\limits_{0}^{\infty}}
e^{\kappa s}(q^{2}(s)+%
{\textstyle\int\limits_{\mathbb{R}_{0}}}
r^{2}\left(  s,a\right)  \nu(da))ds]<\infty, \label{4.23}%
\end{equation}

for all constants $\kappa>0$.

The infinite horizon BSDE $\left(  \ref{4.22}\right)  -\left(  \ref{4.23}%
\right)  $ has a unique solution, (see e.g. Theorem $3.1$ in \cite{HOP}).

Then, the solutions of $\left(  \ref{4.21}\right)  -\left(  \ref{4.22}\right)
$ are respectively,%
\begin{equation}
\lambda(t)=\exp(-%
{\textstyle\int\limits_{0}^{t}}
\alpha(s)ds), \label{4.24}%
\end{equation}

and, for all $0\leq t\leq T$ and all $T<\infty$,%
\begin{equation}
p(t)\Gamma(t)=E[p(T)\Gamma(T)+%
{\textstyle\int\limits_{t}^{T}}
\lambda(s)\frac{\Gamma(s)}{X(s)}ds\mid\mathcal{F}_{t}], \label{4.25}%
\end{equation}

where $\Gamma(t)$ is given by%
\begin{equation}
\left\{
\begin{array}
[c]{l}%
d\Gamma(t)=\Gamma(t^{-})[\left(  b_{0}(t)-\rho(t)\right)  dt+\sigma
_{0}(t)dB(t)+%
{\displaystyle\int\limits_{\mathbb{R}_{0}}}
\gamma(t,a)\tilde{N}(dt,da)];t\geq0,\\
\text{\ }\Gamma(0)=1.
\end{array}
\right.  \label{4.26}%
\end{equation}

(See e.g. \cite{OSA}).

This gives%
\begin{equation}%
\begin{array}
[c]{c}%
\Gamma(t)=\exp(-%
{\textstyle\int\limits_{0}^{t}}
\sigma_{0}(s)dB(s)+%
{\textstyle\int\limits_{0}^{t}}
\{b_{0}(s)-\rho(s)-\frac{1}{2}\sigma_{0}^{2}(s)\}ds\\
+%
{\textstyle\int\limits_{0}^{t}}
\int\limits_{\mathbb{R}_{0}}\{\ln(1+\gamma(s,a))-\gamma(s,a)\}\nu(da)ds\\
+%
{\textstyle\int\limits_{0}^{t}}
\int\limits_{\mathbb{R}_{0}}\ln(1+\gamma(s,a))\tilde{N}(ds,da);t\geq0.
\end{array}
\label{4.27}%
\end{equation}

Comparing with $\left(  \ref{4.15}\right)  $ we see that%
\begin{equation}
X(t)=x\Gamma(t);t\geq0. \label{4.28}%
\end{equation}

Substituting this into $\left(  \ref{4.25}\right)  $ we obtain%
\begin{equation}
p(t)X(t)=E[p(T)X(T)+%
{\textstyle\int\limits_{t}^{T}}
\exp(-%
{\textstyle\int\limits_{0}^{s}}
\alpha(r)dr)ds\mid\mathcal{F}_{t}]. \label{4.29}%
\end{equation}

Since $\rho$ is bounded away from $0$ we deduce from $\left(  \ref{4.20}%
\right)  $ that%
\begin{equation}
p(T)X(T)=\frac{\lambda(T)}{\rho(T)}=\frac{1}{\rho(T)}\exp(-%
{\textstyle\int\limits_{0}^{T}}
\alpha(r)dr)\rightarrow0\text{ dominatedly as }T\rightarrow\infty.
\label{4.30}%
\end{equation}

Hence, by letting $T\rightarrow\infty$ in $\left(  \ref{4.29}\right)  $ we
get
\begin{equation}
p(t)X(t)=E[%
{\textstyle\int\limits_{t}^{\infty}}
\exp(-%
{\textstyle\int\limits_{0}^{s}}
\alpha(r)dr)ds\mid\mathcal{F}_{t}]. \label{4.31}%
\end{equation}

This implies that $p(t)>0$ and hence $\rho(t)$ given by $\left(
\ref{4.20}\right)  $ is indeed a maximum point of $H$.

By $(4.20)$ we therefore get the following candidate for the optimal relative
consumption rate%
\begin{equation}
\rho(t)=\rho^{\ast}(t)=\frac{\exp(-%
{\textstyle\int\limits_{0}^{t}}
\alpha(r)dr)}{E[%
{\textstyle\int\limits_{t}^{\infty}}
\exp(-%
{\textstyle\int\limits_{0}^{s}}
\alpha(r)dr)ds\mid\mathcal{F}_{t}]};t\geq0, \label{4.32}%
\end{equation}

If $\alpha$ is such that this expression for $\rho^{\ast}(t)$ is bounded away
from $0$, then $\rho^{\ast}$ is optimal. Note that the corresponding optimal
net cash flow $X^{\ast}(t)$ is given by%
\begin{equation}
X^{\ast}(t)=x\exp(%
{\textstyle\int\limits_{0}^{t}}
\sigma_{0}(s)dB(s)+%
{\textstyle\int\limits_{0}^{t}}
\{b_{0}(s)-\rho(s)-\frac{1}{2}\sigma_{0}^{2}(s)\}ds);t\geq0. \label{4.33}%
\end{equation}

In particular, $X^{\ast}(t)>0$ for all $t\geq0$, as required.

In particular, if $\alpha(r)=\alpha>0$ (constant) for all $r$, then%
\begin{equation}
\rho^{\ast}(t)=\alpha;t\geq0\text{.} \label{4.34}%
\end{equation}

With this choice of $\rho^{\ast}$ we see by $\left(  \ref{4.31}\right)  $,
$\left(  \ref{4.24}\right)  $ and condition $\left(  \ref{eq1.4}\right)  $ for
$Y(t)$ that the transversality conditions $(H_{3})$ and $(H_{6})$ hold, and we
have proved:

\begin{theorem}
The optimal relative consumption rate $\rho^{\ast}$ $(t)$ for problem $\left(
\ref{4.12}\right)  -\left(  \ref{4.18}\right)  $ is given by $\left(
\ref{4.32}\right)  $, provided that $\rho^{\ast}$ $(t)$ is bounded away from
$0$.

In particular, if $\alpha(r)=\alpha>0$ (constant) for all $r$, then
$\rho^{\ast}(t)=\alpha$ for all $t$.
\end{theorem}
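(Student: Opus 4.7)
The plan is to verify that the candidate $\rho^{\ast}$ defined by (4.32) satisfies all the hypotheses of the sufficient maximum principle (Theorem 2.1), which would then certify its optimality. Most of the building blocks are already assembled in the paragraphs preceding the statement, so what remains is checking the hypotheses rather than new computation.

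First I would dispatch the concavity hypothesis $(H_{1})$ and the conditional maximum principle $(H_{2})$. The Hamiltonian (4.19) is affine in $x$ and in $y$, and its dependence on $\rho$ is $\hat{\lambda}(t)\ln\rho - \hat{p}(t)\hat{X}(t)\rho$, which is concave on $(0,\infty)$ because $\hat{\lambda}(t)>0$ and $\hat{p}(t)\hat{X}(t)>0$ by (4.31); the terminal function $h(y)=y$ is trivially concave. For $(H_{2})$, equation (4.20) is exactly the first-order condition for the pointwise maximization of $E[H\mid \mathcal{F}_{t}]$ over $\rho>0$, and $\rho^{\ast}$ is its unique solution.

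The main obstacle is verifying the transversality conditions $(H_{3})$ together with the integrability conditions $(H_{4})$-$(H_{5})$. For the forward transversality, (4.30) gives $\hat{p}(T)\hat{X}(T) = \rho^{\ast}(T)^{-1}\exp(-\int_{0}^{T}\alpha(r)\,dr) \to 0$ dominatedly as $T\to\infty$, and since $\hat{p}(T)>0$ while $X(T)\geq 0$ for every admissible $\rho$, one obtains $\underset{T\to\infty}{\underline{\lim}}\,E[\hat{p}(T)\bigtriangleup\hat{X}(T)]\leq 0$. For the backward transversality, $\hat{\lambda}(T)=\exp(-\int_{0}^{T}\alpha(s)\,ds)\to 0$ exponentially, which combined with the integrability (1.4) for $\hat{Y}$ and for $Y$ gives $E[\hat{\lambda}(T)\bigtriangleup\hat{Y}(T)]\to 0$. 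For $(H_{4})$-$(H_{5})$, I would exploit the boundedness of $b_{0},\sigma_{0},\gamma$, the exponential decay of $\hat{\lambda}$, the admissibility assumption that $\rho$ is bounded away from $0$, the explicit representation (4.27)-(4.28) of $\hat{X}$ as a stochastic exponential, and the integrability (4.23) of $(\hat{p},\hat{q},\hat{r})$; the most delicate terms are $\hat{p}(t)^{2}\hat{X}(t)^{2}\sigma_{0}(t)^{2}$ and its jump analogue, both controlled through (4.31) and the uniform bounds on $\sigma_{0}$ and $\gamma$.

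The second assertion, for constant $\alpha>0$, reduces to a one-line calculation: $\int_{t}^{\infty}e^{-\alpha s}\,ds = \alpha^{-1}e^{-\alpha t}$, so (4.32) collapses to $\rho^{\ast}(t)=\alpha$, which is trivially bounded away from $0$, and hence the first assertion applies and $\rho^{\ast}\equiv\alpha$ is optimal.
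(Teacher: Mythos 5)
Your proposal is correct and follows essentially the same route as the paper: the paper derives $(4.31)$--$(4.32)$ from the first-order condition $(4.20)$ and the adjoint equations, observes that $p(t)>0$ so that $(4.20)$ is indeed a maximum point, and then invokes Theorem 2.1 after noting that $(4.31)$, $(4.24)$ and condition $(1.4)$ give the transversality conditions $(H_{3})$ and $(H_{6})$; you simply make the hypothesis-checking more explicit, since the paper leaves $(H_{1})$, $(H_{2})$, $(H_{4})$, $(H_{5})$ implicit. One small caution: the joint concavity required by $(H_{1})$ should be verified in the original consumption variable $c$, where $H=\lambda(-\alpha(t)y+\ln c)+(xb_{0}(t)-c)p+\cdots$ is the sum of a concave and an affine function, rather than in $\rho$, because the cross term $-x\rho\,\hat{p}(t)$ in $(4.19)$ is not jointly concave in $(x,\rho)$; your variable-by-variable check of $(4.19)$ glosses over this (as does the paper itself), but the fix is immediate.
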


\begin{description}
\item[Acknowledgment] We want to thank Brahim Mezerdi for helpful discussions.
\end{description}

\end{document}